\documentclass[reqno]{amsart}
\usepackage{amssymb,amsmath,amsthm,latexsym,booktabs,todonotes, color, comment, lineno, verbatim}

\theoremstyle{definition}
\newtheorem{definition}{Definition}

\theoremstyle{plain}
\newtheorem{lemma}[definition]{Lemma}

\newtheorem{theorem}[definition]{Theorem}
\newtheorem{corollary}[definition]{Corollary}
\newtheorem{conjecture}[definition]{Conjecture}

\usepackage{todonotes}

\usepackage[margin=1in]{geometry}
\usepackage{amsmath,amssymb,mathtools, comment}
\usepackage{tikz}
\usepackage{xcolor}
\usepackage{booktabs}
\usepackage{enumitem}
\usepackage[hidelinks]{hyperref}

\definecolor{cellblue}{RGB}{220,235,250}
\definecolor{cellgold}{RGB}{252,239,205}
\definecolor{cellgreen}{RGB}{221,242,221}
\definecolor{vertexred}{RGB}{155,30,45}

\newcommand{\eps}{\varepsilon}

\colorlet{cellpeach}{orange!12}
\colorlet{celllavender}{violet!10}
\colorlet{cellrose}{red!9}
\colorlet{cellmint}{green!9}
\colorlet{cellcream}{yellow!12}

\begin{document}


\title{All Polyominoes Are $C_4$-face-magic}
\author{Parikshit Chalise}
\address{Department of Applied Mathematics and Statistics\\
        Johns Hopkins University\\
        Baltimore, MD 21218\\
         USA}
         \email{pchalis1@jhu.edu}
\author{Richard M. Low}
\address{Department of Mathematics\\
         San Jose State University\\
         San Jose, CA 95192\\
         USA}
\email{richard.low@sjsu.edu}
\author{Arman Eisenkolb-Vaithyanathan}
\address{Lynbrook High School, San Jose, CA 95192, USA}
\email{arman.eisenkolbvaithyanathan@gmail.com}

\keywords{Graph labeling}

\date{August 9, 2026. Version 1.0\\
\indent
\textit{2020 Mathematics Subject Classification.} 05C78}

\begin{abstract}
For a planar graph $G = (V, E)$ embedded in $\mathbb{R}^2$, let $\mathcal{F}(G)$ denote the set of faces of $G$. Then $G$ is called a \textit{$C_n$-face-magic} graph if there exists a bijection $f: V(G) \to \{1, 2, \dots, |V(G)|\}$ such that for any $F \in \mathcal{F}(G)$ with $F \cong C_n$, the sum of all the vertex labels along $C_n$ is a constant $c$. In this paper, we prove that all polyominoes are $C_4$-face-magic.
\end{abstract}
\maketitle

\section{Introduction} \label{Prelim}

In this paper, we consider only finite simple connected graphs unless specified otherwise. Standard graph theory terminology and notation, as found in \cite{Harary94}, are used. 

For a planar graph $G = (V, E)$ embedded in $\mathbb{R}^2$, let $\mathcal{F}(G)$ denote the set of faces of $G$. Then $G$ is called a \textit{$C_n$-face-magic} graph if there exists a bijection $f: V(G) \to \{1, 2, \dots, |V(G)|\}$ such that for any $F \in \mathcal{F}(G)$ with $F \cong C_n$, the sum of all the vertex labels along $C_n$ is a constant $c$.\\

\noindent
\textbf{Definition.}
A \textit{polyomino} is a finite edge-connected union of unit squares in the square lattice $\mathbb{Z}^2$. We identify the polyomino with its associated plane graph, whose vertices are the lattice vertices of the cells and whose edges are the lattice edges. Consequently, every bounded face is bounded by a 4-cycle. When the polyomino has ``holes," other bounded faces may occur.\\

Various researchers \cite{Curran_2, Curran_1, Myers} have studied $C_4$-face-magic labelings of some direct products of graphs on the projective plane, the torus, and the Klein bottle. The general face-magic property was analyzed for various classes of polygonal graphs on the plane in \cite{SLL}. There, the authors made the following conjecture.

\begin{conjecture} \label{conj}
$($\textit{Shiu--Low--Liu} \cite{SLL}$)$.
Every polyomino is $C_4$-face-magic.
\end{conjecture}

\noindent
In Section \ref{Result} of this paper, we prove Conjecture \ref{conj}. In addition, a Java program \cite{Arman_Program} that implements the constructive proof of Conjecture \ref{conj} is provided.

\section{Path decompositions and auxiliary lemmas}
 \label{lemmas}
 
Embed the polyomino $P = (V, E)$ in the first quadrant of the $xy$-plane so that the leftmost edges of $P$ are on the $y$-axis, the bottommost edges of $P$ are on the $x$-axis, and all the vertices of $P$ are at integer lattice points. 

Let $\mathcal{H}$ be the set of components of the spanning subgraph of $P$ formed by the horizontal edges. Each member of $\mathcal{H}$ is a path, and we order its vertices from left to right. The family $\mathcal{V}$ of vertical paths is defined analogously, and we order each vertical path from bottom to top. For each $H \in \mathcal{H}$, define 
\begin{equation}\label{eq:path-sign}
    \eps (H) = (-1)^{a+y},
\end{equation}
where $(a,y)$ is the leftmost vertex of $H$.
 For any $v = (x, y) \in V(P)$, define
 \begin{equation}\label{eq:vertex-sign}
\chi (v) = (-1)^{x+y}.
\end{equation}

For a polyomino $P$, there are two cases to consider: (1) There is a path in $\mathcal{H}$ or $\mathcal{V}$ of odd order; (2) Every path in $\mathcal{H}$ and $\mathcal{V}$ is of even order. Two technical lemmas will be needed in order to prove Conjecture \ref{conj}.  

\begin{lemma}\label{Lemma_AllEven}
Suppose every path in both $\mathcal H$ and $\mathcal V$ has even order. Then
\begin{equation}\label{eq:horizontal-balance}
    \sum_{\substack{H\in\mathcal H\\ \eps(H)=1}} |{V(H)|}
    =
    \sum_{\substack{H\in\mathcal H\\ \eps(H)=-1}} |{V(H)|}
    =
    \frac{|{V(P)}|}2. \nonumber
\end{equation} 
\end{lemma}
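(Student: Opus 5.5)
The plan is to reduce the statement to a signed count and then convert it, via telescoping along rows and columns, into a sum over path endpoints that the vertical evenness hypothesis controls. For a vertex $v$, let $H(v)\in\mathcal H$ be the horizontal path containing it, and put $s(v)=\eps(H(v))$. Since every vertex lies in exactly one horizontal path, the two sums in the statement add up to $|V(P)|$. So it suffices to show
\[
S:=\sum_{H\in\mathcal H}\eps(H)\,|V(H)| = \sum_{v\in V(P)} s(v)=0 .
\]
I will not use the fact that each even path contains equally many vertices with $\chi=1$ and $\chi=-1$. It gives no information about $\eps$ by itself.

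\textbf{Step 1 (rewrite via endpoints).} For $H$ with left end $l=(x_l,y)$ and right end $r=(x_r,y)$, we have $\eps(H)=\chi(l)$. Because $|V(H)|=x_r-x_l+1$ is even, also $\chi(r)=-\chi(l)$. Hence
\[
\eps(H)|V(H)| = -\chi(r)\bigl(x_r+\tfrac12\bigr)-\chi(l)\bigl(x_l-\tfrac12\bigr),
\]
so $S$ becomes a $\chi$-weighted sum of the $x$-coordinates of the endpoints of horizontal paths, each shifted by $\pm\frac12$. Equivalently, $S=\sum_H\sum_{v\in H}\chi(v)(-1)^{k(v)}$, where $k(v)$ is the offset of $v$ from the left end of $H$. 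I will keep both forms and choose whichever makes Step 2 cleaner.

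\textbf{Step 2 (switch to columns).} A left endpoint of a horizontal path is a lattice vertex with no edge of $P$ to its left, and similarly for right endpoints. So the endpoint sum is a discrete boundary term. Consider $\phi(x,y)=\chi(x,y)\,x$ (or a similar $\chi$-twisted potential). Summing its discrete $x$-differences $\phi(x+1,y)-\phi(x,y)$ over all horizontal edges telescopes row by row to the endpoint expression of Step 1. The aim is then to regroup this edge sum cell by cell, since every horizontal edge belongs to some unit square of $P$. This turns it into a sum over columns, that is, over vertical paths $W\in\mathcal V$. On each vertical path $\chi$ alternates and $|V(W)|$ is even, so the contribution of each column should cancel: the $\pm1$ weights pair off consecutive vertices $(x,y),(x,y+1)$ along $W$. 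This pairing is where the evenness of the vertical paths enters, while the evenness of the horizontal paths was already used in Step 1.

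\textbf{Main obstacle.} The hard part is Step 2, namely choosing the right potential and making the regrouping precise when the rows are disconnected. A single row $y$ may split into several horizontal paths, and holes produce boundary vertices that are not on the outer face, so the telescoping must be done carefully path by path. What I would try first is to work directly with $t(v)=(-1)^{k(v)}$. I would show that along each vertical path $W$, consecutive vertices $(x,y),(x,y+1)$ satisfy $s(x,y)=-s(x,y+1)$ whenever they lie in horizontal paths that are ``aligned''. This means the two horizontal paths have left endpoints of the same $x$-parity, and I expect to deduce it from the evenness of all paths together with edge-connectivity of the cells. Summing these relations over the even-order vertical paths would then give $S=0$. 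If the alignment claim fails for some configurations, I would fall back on induction on the number of cells, removing a suitable extreme column segment. I would check first that this removal preserves the all-even hypothesis, and that check is itself the delicate point.
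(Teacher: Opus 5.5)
Your Step 1 is correct. Step 2, which carries all the content, does not work as written, and neither fallback closes the gap.

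\begin{itemize}
\item \textbf{The telescoping.} With $\phi=\chi x$, the differences along a horizontal path $H$ with ends $l,r$ telescope to $\chi(r)x_r-\chi(l)x_l=-\eps(H)(x_l+x_r)$. That is not $\eps(H)|V(H)|=\eps(H)(x_r-x_l+1)$. Moreover, no fixed potential $\psi$ can satisfy $\psi(r)-\psi(l)=\eps(H)|V(H)|$ for all even rows. At height $0$, the paths $(0,0)(1,0)$, $(1,0)(2,0)$, $(2,0)(3,0)$ force $\psi(3,0)-\psi(0,0)=2-2+2=2$, while the path $(0,0)(1,0)(2,0)(3,0)$ requires $4$.
\item \textbf{The regrouping.} The cell-by-cell regrouping into columns is never specified.
\item \textbf{The alignment claim.} It is false already for the plus-pentomino of Example~2. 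The vertices $(1,0)$ and $(1,1)$ are consecutive on the vertical path at $x=1$, but their horizontal paths start at $x=1$ and $x=0$, and $s(1,0)=s(1,1)=-1$. Along that vertical path $s$ reads $-1,-1,1,1$, so the consecutive pairs contribute $-2$ and $+2$ rather than $0$.
\item \textbf{The induction.} Removing an extreme column segment generally destroys the all-even hypothesis: deleting the left arm of the plus-pentomino leaves two rows of order $3$. The induction would need a stronger statement, which you do not provide.
\end{itemize}

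The missing idea is to sum your potential $\chi(v)x(v)$ over vertices, rather than summing its differences over edges. If $H$ has vertices $(a+k,y)$ with $0\le k<2t$, then $\chi(a+k,y)=\eps(H)(-1)^k$, so
\[
\sum_{v\in V(H)}\chi(v)\,x(v)=\eps(H)\sum_{k=0}^{2t-1}(-1)^k(a+k)=-t\,\eps(H)=-\tfrac12\,\eps(H)\,|V(H)|.
\]
On a vertical path, $x$ is constant and $\chi$ alternates over an even number of vertices, so the contribution is $0$. Since $\mathcal H$ and $\mathcal V$ both partition $V(P)$, we get $-\tfrac12 S=\sum_{v\in V(P)}\chi(v)x(v)=0$.

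This is exactly the paper's proof, phrased there via the matchings $M_{hor}$, $M_{ver}$ and the checkerboard classes $B$, $W$. The global coordinate $x$ replaces your path-dependent weight $(-1)^{k(v)}$ and, unlike it, is constant on columns. Also, the $\chi$-balance of even vertical paths, which you set aside as uninformative, is precisely what makes the column contributions vanish.
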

\begin{proof}
Form a perfect matching $M_{hor}$ of $V(P)$ by pairing consecutive vertices on every horizontal path in $\mathcal{H}$. In a similar fashion, form another perfect matching $M_{ver}$ of $V(P)$ by pairing consecutive vertices on every vertical  path in $\mathcal{V}$.

Let
\[
    B=\{v\in V(P):\chi(v)=1\},
    \qquad
    W=\{v\in V(P):\chi(v)=-1\}.
\]

\vspace{10pt}

For a matching edge $e$, denote its endpoints in $B$ and $W$ by $b(e)$ and $w(e)$, respectively. Let $x(v)$ be the first coordinate of $v$.

Suppose that $H\in\mathcal H$ has order $2t$. The left endpoint of each of its $t$ edges in $M_{hor}$ has checkerboard sign $\eps(H)$. Hence
\[
    x(w(e))-x(b(e))=\eps(H)
\]
for every $e\in M_{hor}$ contained in $H$. Therefore
\begin{equation}\label{eq:horizontal-displacement}
    \sum_{H\in\mathcal H}\eps(H)|{V(H)|}
    =
    2\sum_{e\in M_{hor}}\bigl(x(w(e))-x(b(e))\bigr).
\end{equation}
Since $M_{hor}$ is a perfect matching,
\begin{equation}\label{eq:coordinate-difference}
    \sum_{e\in M_{hor}}\bigl(x(w(e))-x(b(e))\bigr)
    =
    \sum_{w\in W}x(w)-\sum_{b\in B}x(b).
\end{equation}
The same difference can be evaluated using $M_{ver}$. Every edge of $M_{ver}$ is vertical, so its endpoints have the same first coordinate. Consequently,
\begin{equation}\label{eq:vertical-displacement}
    \sum_{w\in W}x(w)-\sum_{b\in B}x(b)
    =
    \sum_{e\in M_{ver}}\bigl(x(w(e))-x(b(e))\bigr)
    =0.
\end{equation}
It follows from \eqref{eq:horizontal-displacement}--\eqref{eq:vertical-displacement} that
\[
    \sum_{H\in\mathcal H}\eps(H)|{V(H)|}=0,
\]
which establishes the lemma.
\end{proof}

\begin{lemma}\label{AltPath}
Let
\[
    Q_i=q_{i,0}q_{i,1}\cdots q_{i,n_i-1},
    \qquad 0\leq i\leq r-1
\]
be pairwise vertex-disjoint ordered paths $($where paths of order one are allowed$)$. For each $i$, let $\eps_i\in\{1,-1\}$, and $n=\sum_{i=0}^{r-1} n_i$. If there exists an $n_i$ that is odd, or every $n_i$ is even and
\begin{equation}\label{eq:balance-hypothesis}
    \sum_{\eps_i=1}n_i
    =
    \sum_{\eps_i=-1}n_i
    =
    \frac n2, 
\end{equation}

\noindent
then there exists a constant $K \in \mathbb{N}$ and a bijection
\[
    \lambda:\bigcup_{i=0}^{r-1} V(Q_i)\longrightarrow \{1, 2, \dots, n\}
\]
such that

\begin{equation}\label{Eq_4}
\lambda(q_{i,j})+\lambda(q_{i,j+1})
    =
    K-\eps_i(-1)^j
\end{equation}

\vspace{10pt}

\noindent
for every $i \in \{0, 1, \dots, r-1\}$ and every $0\leq j<n_i-1$.  
\end{lemma}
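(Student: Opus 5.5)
The idea is to label all the paths as consecutive blocks of one long zigzag sequence. Fix $K$ and set
\[
s_g=2g+1 \ (g \text{ even}),\qquad s_g=K-2-g \ (g \text{ odd}),\qquad 0\le g\le n-1,
\]
that is, $1,\,K-2,\,3,\,K-4,\,5,\dots$. Then $s_g+s_{g+1}=K-(-1)^g$ for every $g$. If $n$ is even, take $K=n+2$: the even positions receive the odd numbers $1,3,\dots,n-1$ and the odd positions receive the even numbers $n,n-2,\dots,2$, so $g\mapsto s_g$ is a bijection onto $\{1,\dots,n\}$. If $n$ is odd, take $K=n+1$ (this gives $s_g=2g+1$ and $s_g=n-1-g$ on the two parities); the positions are then filled by $1,3,\dots,n$ and $n-1,n-3,\dots,2$, again a bijection. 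In both cases $K\in\mathbb N$.

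The labeling $\lambda$ is obtained by placing the paths $Q_0,Q_1,\dots$ one after another into consecutive blocks of global indices, each path either forwards or backwards. The constraint \eqref{Eq_4} only involves consecutive vertices of the same path, so nothing has to hold across block boundaries. Suppose $Q_i$ occupies the block starting at $g_0$.

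\begin{itemize}
\item Placed forwards, $q_{i,j}\mapsto s_{g_0+j}$ satisfies \eqref{Eq_4} iff $(-1)^{g_0}=\eps_i$.
\item Placed backwards, $q_{i,j}\mapsto s_{g_0+n_i-1-j}$ has consecutive sums $K-(-1)^{g_0+n_i-2-j}$. This satisfies \eqref{Eq_4} iff $(-1)^{g_0+n_i}=\eps_i$.
\end{itemize}

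Hence an even-order path fits at any start index: it goes forwards or backwards according to whether $(-1)^{g_0}=\eps_i$. Placing a block of even length leaves the parity of the next start unchanged. So when every $n_i$ is even, any order of the paths works, with orientations chosen path by path. This seems not even to need \eqref{eq:balance-hypothesis}, which I would double-check by small examples before relying on it.

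The real difficulty is the case with odd paths. Reversing an odd path does not change the parity condition: it still needs $(-1)^{g_0}=\eps_i$. Moreover, each odd block flips the parity of the next start index. So the odd paths, in the order they are placed, must have alternating signs $\eps_i$. This fails, for example, when all odd paths have $\eps_i=1$.

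To handle this I would abandon the single rigid zigzag and use its underlying structure. Along a path, the vertices with $\eps_i(-1)^j=1$ form a class $A$ whose labels increase by $2$, and the remaining vertices form a class $B$ whose labels decrease by $2$. Each adjacent pair from these classes sums to $K\mp1$. I would assign the $A$-vertices an increasing run of one residue class and the $B$-vertices a decreasing run of the complementary class, with $K$ as above.

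The $A$- and $B$-counts of each path must then be aligned. The first thing I would try is to order the paths so that every path with surplus $|A|-|B|=\pm1$ is placed where the running offset $(\#A \text{ so far})-(\#B \text{ so far})$ is compatible. I would absorb the leftover imbalance by letting the two runs start from opposite ends of $\{1,\dots,n\}$, so that the one unmatched label (present when $n$ is odd) is consumed at the end. Paths of order one are unconstrained and can serve as fillers to correct the offset.

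Verifying that a suitable order always exists is the main obstacle. If it cannot be done in general, I would fall back on a direct inductive construction. That would remove one odd path at a time and relabel by the affine maps $x\mapsto x+c$ or $x\mapsto K-x$, which preserve \eqref{Eq_4} up to a change of $K$ or a swap of the sign $\eps_i$.
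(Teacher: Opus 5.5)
There is a genuine gap, and it comes from misreading your own orientation computation. Your second bullet says a backwards placement works iff $(-1)^{g_0+n_i}=\eps_i$. For even $n_i$ this is the \emph{same} condition as the forward one. For odd $n_i$ it is the \emph{opposite} condition $(-1)^{g_0}=-\eps_i$. You drew the conclusions the other way round. A path of order $2$ receives $\{s_{g_0},s_{g_0+1}\}$, whose sum is $K-(-1)^{g_0}$ in either orientation. A path of order $3$ at $g_0=0$ reads $K-1,K+1$ forwards and $K+1,K-1$ backwards. So even blocks are rigid and preserve the start parity, while odd blocks can be oriented to fit either sign and flip the parity.

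Your all-even argument therefore collapses, and its conclusion that the balance hypothesis is unnecessary is false. Pairing $q_{i,0}q_{i,1},\,q_{i,2}q_{i,3},\dots$ gives $\frac{n(n+1)}{2}=\sum_i\frac{n_i}{2}(K-\eps_i)$, i.e.\ $K=n+1+\frac{1}{n}\sum_i\eps_i n_i$. For two paths of orders $2$ and $4$ with signs $+1$ and $-1$, this forces $K=20/3$, so no labeling exists. In the balanced case it forces $K=n+1$, so your single zigzag (with $K=n+2$) cannot be used there at all. The paper handles this case with two runs and $K=n+1$. The odd labels $1,n-1,3,n-3,\dots$ go to the paths with $\eps_i=1$, and the even labels $n,2,n-2,4,\dots$ go to those with $\eps_i=-1$, each run cut into even-length blocks. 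The balance hypothesis is exactly what makes each run have total length $n/2$.

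In the case with an odd path, you abandoned the zigzag and left the replacement unverified: the $A/B$ runs with an ordering you did not exhibit, or an induction. So that case is not proved either. Once the parities are corrected, the zigzag you already built suffices, and this is the paper's Case (1). Starting at index $0$, place all even-order paths with $\eps_i=1$. Then place one odd-order path oriented to fit, which switches the start parity. Then place all even-order paths with $\eps_i=-1$, and finally the remaining odd-order paths, each oriented to match the current parity.

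Separately, your displayed formulas for $s_g$ do not produce $1,K-2,3,K-4,\dots$. You want $s_g=g+1$ for even $g$ and $s_g=K-1-g$ for odd $g$; with your choices of $K$ this is indeed a bijection onto $\{1,\dots,n\}$.
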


\begin{proof}
\underline{Case (1)}. Suppose that some $n_i$ is odd. Let
\[
    m=\left\lfloor\frac n2\right\rfloor
    \qquad\text{and}\qquad
    K=2m+2.
\]
Define a permutation $a_1\cdots a_n$ of $\{1, 2, \dots, n\}$ by
\[
    a_{2j+1}=2j+1,
    \qquad
 0\leq j<\left\lceil\frac n2\right\rceil
\]
and
\[
    a_{2j+2}=2m-2j,
    \qquad
    0\leq j<m.
\]
Thus
\[
    a_1 a_2 \cdots a_n
    =
    1,2m,3,2m-2,5,2m-4,\ldots,
\]
and
\begin{equation}\label{eq:first-alternating-sequence}
    a_j+a_{j+1}=K-(-1)^{j-1},
    \qquad 1\leq j < n.
\end{equation}

\vspace{10pt}

\noindent
Hence the consecutive sums alternate between $K-1$ and $K+1$.

Choose an odd-order path $Q_s$. Arrange the paths so that all even-order paths with $\eps_i=1$ occur first, followed by $Q_s$, then all even-order paths with $\eps_i=-1$, and finally the remaining odd-order paths. Partition $a_1 \cdots a_n$ into consecutive blocks whose lengths are the orders of the paths in this arrangement.

Consider a block of length $l$. If $l$ is even, then its first and last internal edge-sums in~\eqref{eq:first-alternating-sequence} lie on the same side of $K$, and the first internal edge-sum of the next block lies on that same side. If $l$ is odd, then its first and last internal edge-sums lie on opposite sides of $K$. Reversing such a block therefore allows either value $K-1$ or $K+1$ to be prescribed as its first internal edge-sum; moreover, the next block begins on the opposite side of $K$.

It follows that every even-order block before $Q_s$ begins with sum $K-1$, every even-order block after $Q_s$ begins with sum $K+1$, and each odd-order block can be oriented so that its first internal edge-sum is $K-\eps_i$. Assign each oriented block to the corresponding ordered path. Since the internal edge-sums alternate within every block, \eqref{Eq_4} follows. 

\vspace{10pt}

\underline{Case (2)}. Suppose that every $n_i$ is even and 
\begin{equation}
    \sum_{\eps_i=1}n_i
    =
    \sum_{\eps_i=-1}n_i
    =
    \frac n2. \nonumber
\end{equation}

\vspace{10pt}

\noindent
Since each sum in \eqref{eq:balance-hypothesis} is even, $n=4m$ for some $m$. Set
\[
    K=n+1.
\]
Define for $0\leq j<m$,
\[
    b_{2j+1}=2j+1,
    \qquad
    b_{2j+2}=n-2j-1.
\]
The sequence $b_1,\ldots,b_{n/2}$ consists of all odd integers in $\{1, 2, \dots, n\}$, and
\begin{equation}\label{eq:odd-label-sequence}
    b_j+b_{j+1}=K-(-1)^{j-1},
    \qquad 1\leq j<n/2.
\end{equation}
Set
\[
    c_j=n+1-b_j.
\]
Then $c_1,\ldots,c_{n/2}$ consists of all even integers in $\{1, 2, \dots, n\}$, and
\begin{equation}\label{eq:even-label-sequence}
    c_j+c_{j+1}=K+(-1)^{j-1},
    \qquad 1\leq j<n/2.
\end{equation}

By \eqref{eq:balance-hypothesis}, the paths with $\eps_i=1$ have total order $n/2$. Partition the sequence $b_1,\ldots,b_{n/2}$ into consecutive blocks having their orders. Since every block has even length, each block begins with edge-sum $K-1$. Similarly, partition $c_1,\ldots,c_{n/2}$ into blocks corresponding to the paths with $\eps_i=-1$; each such block begins with edge-sum $K+1$. Assigning the blocks to their corresponding paths gives \eqref{Eq_4}.
\end{proof}


\section{Proof of Conjecture \ref{conj}} \label{Result}

Using Lemmas \ref{Lemma_AllEven} and \ref{AltPath}, we now prove the main result.

\begin{theorem}\label{Theorem_Main}
Every polyomino $P$ is $C_4$-face-magic.
\end{theorem}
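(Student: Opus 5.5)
The plan is to label $P$ through its horizontal paths, choosing the labels so that the two horizontal edges of every unit square have sums $K-1$ and $K+1$ in some order. Then every such face sums to $2K$. Concretely, take the paths of $\mathcal H$ as the ordered paths $Q_i$ in Lemma \ref{AltPath}, with $\eps_i=\eps(H)$ as in \eqref{eq:path-sign}. Apply Lemma \ref{AltPath} to get $K$ and a bijection $\lambda:V(P)\to\{1,\dots,|V(P)|\}$; this is legitimate because the paths of $\mathcal H$ partition $V(P)$.

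\textbf{Key computation.} Let $H\in\mathcal H$ have leftmost vertex $(a,y)$. Since $H$ consists of consecutive lattice points, the vertex $(x,y)$ of $H$ is $q_{i,j}$ with $j=x-a$. By \eqref{Eq_4},
\[
\lambda(x,y)+\lambda(x+1,y)=K-(-1)^{a+y}(-1)^{x-a}=K-\chi(x,y).
\]
This holds for every horizontal edge of $P$, independently of which path contains it.

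Now take a bounded face $F\cong C_4$. I would argue that $F$ is a lattice unit square $[x,x+1]\times[y,y+1]$: either a cell of $P$, or a one-cell hole whose four boundary edges all lie in $P$. The only unbounded face isomorphic to $C_4$ arises when $P$ is a single cell, and then it is the same square. In every case both horizontal sides of $F$ are edges of $P$, so the sum of labels on $F$ is
\[
\bigl(K-\chi(x,y)\bigr)+\bigl(K-\chi(x,y+1)\bigr)=2K,
\]
because $\chi(x,y+1)=-\chi(x,y)$. Hence $\lambda$ is $C_4$-face-magic with constant $2K$.

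\textbf{Verifying the hypotheses of Lemma \ref{AltPath}.} Split into cases.
\begin{enumerate}
\item Some path of $\mathcal H$ has odd order. Then Case (1) of Lemma \ref{AltPath} applies directly.
\item Every path of $\mathcal H$ has even order but some path of $\mathcal V$ has odd order. Run the whole argument with the roles of $x$ and $y$ exchanged: use $\mathcal V$, with $\eps(V)=(-1)^{x+b}$ for the bottom vertex $(x,b)$ of $V$. Each square then gets vertical side sums $K\mp\chi$, which again total $2K$.
\item Every path in both $\mathcal H$ and $\mathcal V$ has even order. Lemma \ref{Lemma_AllEven} supplies exactly the balance hypothesis \eqref{eq:balance-hypothesis}, so Case (2) of Lemma \ref{AltPath} applies.
\end{enumerate}

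\textbf{Main obstacle.} The delicate point is the face analysis, specifically showing that every $C_4$ face, including those created by holes, is a lattice unit square with both horizontal sides present in $P$. The check here is that any lattice 4-cycle in $\mathbb Z^2$ bounds a unit square, so the sum computation covers it. Beyond that, the argument reduces to the parity identity $j\equiv x-a$, which holds because each horizontal path is a contiguous run of lattice points.
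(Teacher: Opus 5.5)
Your proposal is correct and follows the paper's proof essentially step for step. It uses the same three-case split (an odd horizontal path; all horizontal paths even but some vertical path odd, handled by exchanging the axes; all paths even, handled via Lemma~\ref{Lemma_AllEven}), the same application of Lemma~\ref{AltPath} to $\mathcal H$ with signs $\eps(H)$, and the same identity $\lambda(x,y)+\lambda(x+1,y)=K-\chi(x,y)$ giving face sum $2K$; your explicit check that every $C_4$ face is a lattice unit square (including a one-cell hole and the outer face of a single cell) is slightly more careful than the paper, which only verifies cells, although the same edge-sum identity covers those faces.
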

\begin{proof}
Let $n=|{V(P)}|$.  For each $H\in\mathcal{H}$, write its vertices from left to right as
\[
(a,y),(a+1,y),\ldots,(a+t-1,y).
\]
Assign to each $H\in\mathcal H$ the sign $\eps(H)$ as defined in \eqref{eq:path-sign}.

\vspace{10pt} 

\underline{Case (1)}. Suppose first that some horizontal path has odd order. By Lemma \ref{AltPath}, we obtain an integer $K$ and a bijection
\[
    \lambda:V(P)\longrightarrow \{1, 2, \dots, n\}
\]
such that for every $H\in\mathcal H$, we have for all $0\leq j<t-1$,
\begin{equation}\label{eq:horizontal-edge-sum}
    \lambda(a+j,y)+\lambda(a+j+1,y)
    =K-\eps(H)(-1)^j\\
    =K-(-1)^{a+j+y}.
\end{equation}
If $C$ is a cell with lower-left vertex $(x,y)$, then \eqref{eq:horizontal-edge-sum} implies
\begin{equation}\label{eq:cell-sum}
    \sum_{v\in V(C)}\lambda(v)=    K-(-1)^{x+y} + K-(-1)^{x+y+1} =2K.
\end{equation}
In other words, the sum of four vertex labels of each cell $C$ is equal to $2K$.

\vspace{10pt} 

\underline{Case (2)}.
If every horizontal path has even order but some vertical path has odd order, apply the argument in Case (1) to the $90^\circ$ rotation of $P$.

\vspace{10pt} 

\underline{Case (3)}.
Now suppose every $H\in \mathcal{H}$ and every $V\in \mathcal{V}$ is of even order. By Lemma~\ref{Lemma_AllEven},
\[
    \sum_{\substack{H\in\mathcal H\\ \eps(H)=1}}|{V(H)}|
    =
    \sum_{\substack{H\in\mathcal H\\ \eps(H)=-1}}|{V(H)}|
    =
    \frac n2.
\]
Applying Lemma~\ref{AltPath} yields \eqref{eq:horizontal-edge-sum}, which in turn implies \eqref{eq:cell-sum}.

In every case, $\lambda$ is a bijection from $V(P)$ to $\{1,2,\dots,n\}$ for which the sum of the four vertex labels of every cell is constant. Therefore, $P$ is $C_4$-face-magic.
\end{proof}

\noindent
\underline{Remarks}. The reader should note that the proofs of Lemmas \ref{Lemma_AllEven} and \ref{AltPath} and of Theorem \ref{Theorem_Main} do not assume that a polyomino is simply connected. Example 3 in Section \ref{Examples} illustrates a $C_4$-face-magic labeling of a polyomino that is not simply connected. Moreover, all of the arguments and constructions in the proofs of Lemmas \ref{Lemma_AllEven} and \ref{AltPath} and of Theorem \ref{Theorem_Main} hold for general polyominoes in which the cells can be vertex-connected as well as edge-connected.\\

\begin{corollary}\label{All_Polyom}
If every component of a graph $G$ is a polyomino, then $G$ is $C_4$-face-magic. 
\end{corollary}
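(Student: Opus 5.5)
The approach is to treat the whole graph $G$ as a single ``generalized polyomino'' and rerun the proof of Theorem~\ref{Theorem_Main} on it, rather than labeling the components separately. Labeling each component on its own would give each component its own magic constant, and these constants need not agree. So instead we translate the components $P_1,\dots,P_k$ of $G$ so that they sit in pairwise disjoint regions of the first quadrant. For instance, place them side by side along the $x$-axis with gaps of at least two columns. This keeps them vertex-disjoint and shares no lattice edges. Let $\mathcal H$ and $\mathcal V$ be the unions of the horizontal and vertical path families of the individual components, and define $\eps(H)$ and $\chi(v)$ by \eqref{eq:path-sign} and \eqref{eq:vertex-sign} with respect to the global coordinates. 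Every cell of every component is still a unit lattice square, each $H\in\mathcal H$ is still an ordered horizontal lattice path, and each $V\in\mathcal V$ is still an ordered vertical lattice path.

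Next we check that Lemmas~\ref{Lemma_AllEven} and~\ref{AltPath} apply verbatim.
\begin{itemize}
\item Lemma~\ref{AltPath} is purely about a family of vertex-disjoint ordered paths with signs, so it applies directly to the combined family $\mathcal H$ (with $n=|V(G)|$). This gives a single constant $K$ and a single bijection $\lambda:V(G)\to\{1,\dots,n\}$.
\item The proof of Lemma~\ref{Lemma_AllEven} uses only two facts: $M_{hor}$ and $M_{ver}$ are perfect matchings of the vertex set consisting of horizontal and vertical lattice edges respectively, and the checkerboard bookkeeping. Neither uses connectivity. Indeed, the paper's Remarks already note that vertex-connected cells are allowed. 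So in the all-even case the balance condition \eqref{eq:balance-hypothesis} holds for the combined family.
\end{itemize}

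Then we split into the same three cases as in Theorem~\ref{Theorem_Main}.
\begin{itemize}
\item If some horizontal path of some component has odd order, use Case (1) of Lemma~\ref{AltPath}.
\item If all horizontal paths are even but some vertical path is odd, rotate every component by $90^\circ$ (simultaneously, keeping them disjoint) and reduce to the previous case.
\item Otherwise, use Lemma~\ref{Lemma_AllEven} together with Case (2) of Lemma~\ref{AltPath}.
\end{itemize}
In each case \eqref{eq:horizontal-edge-sum} holds for every horizontal path. The computation \eqref{eq:cell-sum} then shows that every cell, in whichever component it lies, has label sum $2K$. Since the $C_4$ faces of $G$ are exactly the cells of its components, $G$ is $C_4$-face-magic.

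The main point to get right is the placement of the components. The translation must use integer shifts, so that $\chi$ and $\eps$ remain well defined. It must also avoid creating new lattice adjacencies, which would merge horizontal paths from different components. In addition, no bounded face of $G$'s embedding other than genuine cells should be a $4$-cycle. Any such cycle would have to be a unit square all of whose edges are in $G$, and a gap of two columns prevents that. Parity of the shifts does not matter, because the lemmas are invariant under the global sign conventions.
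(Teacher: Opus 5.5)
Your proof is correct, but it takes a different route from the paper's. The paper uses Theorem~\ref{Theorem_Main} as a black box. It lays the components side by side and adds horizontal and/or vertical edges between successive components, so that they become a single polyomino on the same vertex set; in Figure~\ref{fig:complex-identifications} the two added edges close up a new bridging cell. It then labels that polyomino and deletes the added edges, and the original cells keep sum $2K$. You add nothing. Instead you check that Lemmas~\ref{Lemma_AllEven} and~\ref{AltPath} and the proof of Theorem~\ref{Theorem_Main} never use connectivity, and run the same three-case argument on the disjoint union in global coordinates.

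The paper's route needs only the statement of the theorem, but it tacitly requires placing the components so that the added edges really form a polyomino. Yours avoids that construction and in effect proves the more general fact, hinted at in the Remarks, that any finite, possibly disconnected, union of lattice cells has such a labeling. The cost is auditing the proofs rather than quoting a result.

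Your placement precautions are stronger than needed. $\mathcal H$ is built from edges of $G$, so components at lattice distance one would merge nothing. Even merging paths is harmless, as the paper's own construction shows.

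One claim should be repaired: the $C_4$ faces of $G$ are not always exactly the cells. A ring of eight cells around a missing unit square has that square as a $C_4$ face. This costs you nothing. The computation \eqref{eq:cell-sum} uses only that the bottom and top edges of a unit square are horizontal edges of $G$, so every unit-square face, hole or not, gets sum $2K$. The paper passes over the same point.
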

\begin{proof}
Form a single polyomino (with the same vertex set as $G$) by adding vertical edges and/or horizontal edges between the successive components of $G$. See Figure \ref{fig:complex-identifications}. Then the application of Lemmas \ref{Lemma_AllEven}, \ref{AltPath}, and Theorem \ref{Theorem_Main} provides a $C_4$-face-magic labeling of this polyomino. Finally, remove the added vertical and/or horizontal edges. This gives a $C_4$-face-magic labeling of $G$.
\end{proof}

\vspace{10pt}


\begin{figure}[htbp]
\centering

\begin{tikzpicture}[
    x=0.75cm,
    y=0.75cm,
    line cap=round,
    line join=round,
    edge/.style={
        draw=blue!70!black,
        line width=0.9pt
    },
    identification/.style={
        edge,
        dashed,
        dash pattern=on 3pt off 2.5pt
    },
    vertex/.style={
        circle,
        fill=blue!70!black,
        draw=blue!70!black,
        inner sep=1.3pt
    }
]

\begin{scope}[shift={(0,0)}]

\draw[edge] (0,0) -- (2,0);
\draw[edge] (0,1) -- (2,1);
\draw[edge] (0,2) -- (1,2);

\draw[edge] (0,0) -- (0,2);
\draw[edge] (1,0) -- (1,2);
\draw[edge] (2,0) -- (2,1);

\foreach \x/\y in {
    0/0,1/0,2/0,
    0/1,1/1,2/1,
    0/2,1/2
}{
    \node[vertex] at (\x,\y) {};
}

\end{scope}

\begin{scope}[shift={(3.3,0)}]

\draw[edge] (0,0) -- (3,0);
\draw[edge] (0,1) -- (3,1);
\draw[edge] (0,2) -- (2,2);

\draw[edge] (0,0) -- (0,2);
\draw[edge] (1,0) -- (1,2);
\draw[edge] (2,0) -- (2,2);
\draw[edge] (3,0) -- (3,1);

\foreach \x/\y in {
    0/0,1/0,2/0,3/0,
    0/1,1/1,2/1,3/1,
    0/2,1/2,2/2
}{
    \node[vertex] at (\x,\y) {};
}

\end{scope}

\draw[edge,->,>=stealth]
    (6.9,1) -- (8.2,1);

\begin{scope}[shift={(8.8,0)}]

\draw[edge] (0,0) -- (2,0);
\draw[edge] (0,1) -- (2,1);
\draw[edge] (0,2) -- (1,2);

\draw[edge] (0,0) -- (0,2);
\draw[edge] (1,0) -- (1,2);
\draw[edge] (2,0) -- (2,1);

\foreach \x/\y in {
    0/0,1/0,2/0,
    0/1,1/1,2/1,
    0/2,1/2
}{
    \node[vertex] at (\x,\y) {};
}

\end{scope}

\begin{scope}[shift={(11.8,0)}]

\draw[edge] (0,0) -- (3,0);
\draw[edge] (0,1) -- (3,1);
\draw[edge] (0,2) -- (2,2);

\draw[edge] (0,0) -- (0,2);
\draw[edge] (1,0) -- (1,2);
\draw[edge] (2,0) -- (2,2);
\draw[edge] (3,0) -- (3,1);

\foreach \x/\y in {
    0/0,1/0,2/0,3/0,
    0/1,1/1,2/1,3/1,
    0/2,1/2,2/2
}{
    \node[vertex] at (\x,\y) {};
}

\end{scope}

\draw[identification] (10.8,0) -- (11.8,0);
\draw[identification] (10.8,1) -- (11.8,1);

\end{tikzpicture}

\caption{Forming a single polyomino by adding two horizontal edges between two disjoint polyominoes}
\label{fig:complex-identifications}
\end{figure}


\section{Illustrative examples}\label{Examples}


\underline{\textbf{Example 1}}. Consider the five-cell polyomino
\[
P=\{(0,0),(1,0),(2,0),(0,1),(1,1)\},
\]
where each ordered pair denotes the lower-left corner of a cell. See Figure \ref{fig:five-cell-complex}.

\vspace{10pt}


\begin{figure}[htbp]
\centering

\begin{tikzpicture}[scale=1.5]

\fill[blue!12] (0,0) rectangle (1,1);
\fill[blue!12] (1,0) rectangle (2,1);
\fill[blue!12] (2,0) rectangle (3,1);
\fill[blue!12] (0,1) rectangle (1,2);
\fill[blue!12] (1,1) rectangle (2,2);

\draw[thick] (0,0)--(3,0);
\draw[thick] (0,1)--(3,1);
\draw[thick] (0,2)--(2,2);

\draw[thick] (0,0)--(0,2);
\draw[thick] (1,0)--(1,2);
\draw[thick] (2,0)--(2,2);
\draw[thick] (3,0)--(3,1);

\fill (0,0) circle (1.8pt);
\fill (1,0) circle (1.8pt);
\fill (2,0) circle (1.8pt);
\fill (3,0) circle (1.8pt);

\fill (0,1) circle (1.8pt);
\fill (1,1) circle (1.8pt);
\fill (2,1) circle (1.8pt);
\fill (3,1) circle (1.8pt);

\fill (0,2) circle (1.8pt);
\fill (1,2) circle (1.8pt);
\fill (2,2) circle (1.8pt);

\node at (0.5,0.5) {$C_1$};
\node at (1.5,0.5) {$C_2$};
\node at (2.5,0.5) {$C_3$};
\node at (0.5,1.5) {$C_4$};
\node at (1.5,1.5) {$C_5$};

\end{tikzpicture}

\caption{The five-cell polyomino $P$}
\label{fig:five-cell-complex}
\end{figure}


\noindent
This polyomino has \(n=11\) vertices. Its horizontal paths \(H_0,H_1,H_2\)
have orders \(4,4,3\), respectively, and
\[
\eps(H_0)=1,\qquad
\eps(H_1)=-1,\qquad
\eps(H_2)=1.
\]

Since \(H_2\) has odd order, Case~(1) of Lemma \ref{AltPath} applies. Here
\[
K=12,
\]
and the alternating permutation is
\[
1,10,3,8,5,6,7,4,9,2,11.
\]
Its consecutive sums alternate between
\[
K-1=11
\qquad\text{and}\qquad
K+1=13.
\]

Ordering the paths as \(H_0,H_2,H_1\) and partitioning the permutation
into blocks of lengths \(4,3,4\) gives
\[
H_0:(1,10,3,8),\qquad
H_2:(5,6,7),\qquad
H_1:(4,9,2,11).
\]
Thus the consecutive edge sums along each horizontal path alternate
between \(11\) and \(13\), with the initial phase prescribed by
\(\eps(H)\), as required by Lemma \ref{AltPath}.

For every cell, its lower and upper horizontal edges therefore have
complementary sums \(11\) and \(13\). Hence every cell has vertex-label
sum
\[
11+13=24=2K.
\]
Figure \ref{fig:labeled-five-cell-polyomino} illustrates the use of the odd-order branch of Lemma \ref{AltPath} in the proof of Theorem \ref{Theorem_Main}.

\vspace{10pt}


\begin{figure}[htbp]
\centering

\begin{tikzpicture}[
    scale=1.5,
    vertex/.style={
        circle,
        draw=black,
        fill=white,
        line width=0.55pt,
        minimum size=5.4mm,
        inner sep=0pt,
        font=\small
    },
    edgesum/.style={
        text=red,
        font=\scriptsize\bfseries,
        inner sep=0pt
    }
]

\fill[red!12]    (0,0) rectangle (1,1); 
\fill[green!12]  (1,0) rectangle (2,1); 
\fill[yellow!12] (2,0) rectangle (3,1); 
\fill[orange!12] (0,1) rectangle (1,2); 
\fill[blue!12]   (1,1) rectangle (2,2); 

\draw[thick] (0,0)--(3,0);
\draw[thick] (0,1)--(3,1);
\draw[thick] (0,2)--(2,2);

\draw[thick] (0,0)--(0,2);
\draw[thick] (1,0)--(1,2);
\draw[thick] (2,0)--(2,2);
\draw[thick] (3,0)--(3,1);

\node at (0.5,0.5) {$C_1$};
\node at (1.5,0.5) {$C_2$};
\node at (2.5,0.5) {$C_3$};
\node at (0.5,1.5) {$C_4$};
\node at (1.5,1.5) {$C_5$};

\node[vertex] at (0,0) {$\mathbf{1}$};
\node[vertex] at (1,0) {$\mathbf{10}$};
\node[vertex] at (2,0) {$\mathbf{3}$};
\node[vertex] at (3,0) {$\mathbf{8}$};

\node[vertex] at (0,1) {$\mathbf{4}$};
\node[vertex] at (1,1) {$\mathbf{9}$};
\node[vertex] at (2,1) {$\mathbf{2}$};
\node[vertex] at (3,1) {$\mathbf{11}$};

\node[vertex] at (0,2) {$\mathbf{5}$};
\node[vertex] at (1,2) {$\mathbf{6}$};
\node[vertex] at (2,2) {$\mathbf{7}$};

\node[edgesum, yshift=7pt] at (0.5,2) {$11$};
\node[edgesum, yshift=7pt] at (1.5,2) {$13$};

\node[edgesum, yshift=-7pt] at (0.5,1) {$13$};
\node[edgesum, yshift=-7pt] at (1.5,1) {$11$};
\node[edgesum, yshift=-7pt] at (2.5,1) {$13$};

\node[edgesum, yshift=-7pt] at (0.5,0) {$11$};
\node[edgesum, yshift=-7pt] at (1.5,0) {$13$};
\node[edgesum, yshift=-7pt] at (2.5,0) {$11$};

\end{tikzpicture}

\caption{A $C_4$-face-magic labeling of $P$}
\label{fig:labeled-five-cell-polyomino}

\end{figure}


\hfill $\lozenge$

\vspace{10pt}

\underline{\textbf{Example 2}}. Consider the plus-pentomino, which has \(n=12\) vertices. 

\vspace{10pt}


\begin{figure}[htbp]
\begin{center}

\begin{tikzpicture}[scale=1.5]

\fill[blue!12] (1,2) rectangle (2,3); 
\fill[blue!12] (0,1) rectangle (1,2); 
\fill[blue!12] (1,1) rectangle (2,2); 
\fill[blue!12] (2,1) rectangle (3,2); 
\fill[blue!12] (1,0) rectangle (2,1); 

\draw[thick] (1,3)--(2,3);
\draw[thick] (0,2)--(3,2);
\draw[thick] (0,1)--(3,1);
\draw[thick] (1,0)--(2,0);

\draw[thick] (0,1)--(0,2);
\draw[thick] (1,0)--(1,3);
\draw[thick] (2,0)--(2,3);
\draw[thick] (3,1)--(3,2);

\foreach \x/\y in {
    1/3, 2/3,
    0/2, 1/2, 2/2, 3/2,
    0/1, 1/1, 2/1, 3/1,
    1/0, 2/0
}{
    \fill (\x,\y) circle (1.8pt);
}

\node at (1.5,2.5) {$C_1$};
\node at (0.5,1.5) {$C_2$};
\node at (1.5,1.5) {$C_3$};
\node at (2.5,1.5) {$C_4$};
\node at (1.5,0.5) {$C_5$};

\end{tikzpicture}

\end{center}
\caption{The plus-pentomino}
\label{fig:cell-complex}
\end{figure}


\noindent
Its horizontal
paths have orders
\[
2,4,4,2
\]
and signs
\[
-1,-1,+1,+1.
\]
Every horizontal and vertical path has even order. Moreover,
\[
\sum_{\eps(H)=1}|V(H)|
=
\sum_{\eps(H)=-1}|V(H)|
=
6
=
\frac{n}{2},
\]
in accordance with Lemma \ref{Lemma_AllEven}. Hence the all-even case of Lemma~\ref{AltPath} applies.

In this case,
\[
K=n+1=13.
\]
The odd labels are arranged in the sequence 
\[
1,11,3,9,5,7,
\]
whose consecutive sums alternate between \(12\) and \(14\), while the
even labels are arranged in the sequence 
\[
12,2,10,4,8,6,
\]
whose consecutive sums alternate between \(14\) and \(12\).

Partitioning the odd sequence into blocks of lengths \(4,2\) and the
even sequence into blocks of lengths \(2,4\) yields
\[
\begin{aligned}
H_0&:(12,2),&
H_1&:(10,4,8,6),\\
H_2&:(1,11,3,9),&
H_3&:(5,7).
\end{aligned}
\]
The resulting vertex labeling may be displayed as
\[
\begin{array}{ccccc}
 & 5 & 7 &  &  \\[2pt]
1 & 11 & 3 & 9 \\[2pt]
10 & 4 & 8 & 6 \\[2pt]
 & 12 & 2 &
\end{array}
\]
with the entries placed at their corresponding lattice vertices.

Each negative-sign horizontal path begins with edge sum
\[
K+1=14,
\]
and each positive-sign horizontal path begins with edge sum
\[
K-1=12.
\]
Consequently, the lower and upper horizontal edges of every cell have
complementary sums \(12\) and \(14\), so every cell has vertex-label sum
\[
12+14=26=2K.
\]

This illustrates how Lemma \ref{Lemma_AllEven} supplies the balance condition needed for
the all-even case of Lemma \ref{AltPath}, which, in turn, produces the
\(C_4\)-face-magic labeling used in Theorem \ref{Theorem_Main}.



\begin{figure}[htbp]
\centering

\begin{tikzpicture}[
    scale=1.5,
    vertex/.style={
        circle,
        draw=black,
        fill=white,
        line width=0.55pt,
        minimum size=5.4mm,
        inner sep=0pt,
        font=\small
    },
    edgesum/.style={
        text=red,
        font=\scriptsize\bfseries,
        inner sep=0pt
    }
]

\fill[red!12]    (1,2) rectangle (2,3); 
\fill[green!12]  (0,1) rectangle (1,2); 
\fill[yellow!12] (1,1) rectangle (2,2); 
\fill[orange!12] (2,1) rectangle (3,2); 
\fill[blue!12]   (1,0) rectangle (2,1); 

\draw[thick] (1,3)--(2,3);

\draw[thick] (0,2)--(3,2);
\draw[thick] (0,1)--(3,1);

\draw[thick] (1,0)--(2,0);

\draw[thick] (0,1)--(0,2);
\draw[thick] (1,0)--(1,3);
\draw[thick] (2,0)--(2,3);
\draw[thick] (3,1)--(3,2);

\node at (1.5,2.5) {$C_1$};
\node at (0.5,1.5) {$C_2$};
\node at (1.5,1.5) {$C_3$};
\node at (2.5,1.5) {$C_4$};
\node at (1.5,0.5) {$C_5$};

\node[vertex] at (1,3) {$\mathbf{5}$};
\node[vertex] at (2,3) {$\mathbf{7}$};

\node[vertex] at (0,2) {$\mathbf{1}$};
\node[vertex] at (1,2) {$\mathbf{11}$};
\node[vertex] at (2,2) {$\mathbf{3}$};
\node[vertex] at (3,2) {$\mathbf{9}$};

\node[vertex] at (0,1) {$\mathbf{10}$};
\node[vertex] at (1,1) {$\mathbf{4}$};
\node[vertex] at (2,1) {$\mathbf{8}$};
\node[vertex] at (3,1) {$\mathbf{6}$};

\node[vertex] at (1,0) {$\mathbf{12}$};
\node[vertex] at (2,0) {$\mathbf{2}$};

\node[edgesum, yshift=7pt]  at (1.5,3) {$12$};

\node[edgesum, yshift=7pt]  at (0.5,2) {$12$};
\node[edgesum, yshift=7pt]  at (1.5,2) {$14$};
\node[edgesum, yshift=7pt]  at (2.5,2) {$12$};

\node[edgesum, yshift=-7pt] at (0.5,1) {$14$};
\node[edgesum, yshift=-7pt] at (1.5,1) {$12$};
\node[edgesum, yshift=-7pt] at (2.5,1) {$14$};

\node[edgesum, yshift=-7pt] at (1.5,0) {$14$};

\end{tikzpicture}

\caption{A $C_4$-face-magic labeling of the plus-pentomino, obtained from the all-even case of Lemma~\ref{AltPath}}
\label{fig:labeled-cell-complex}
\end{figure}


\vspace{10pt}

\hfill $\lozenge$

\vspace{10pt}

\indent
\underline{\textbf{Example 3}}. Figure \ref{fig:graph-face-values} illustrates a remark made earlier in the paper.

\vspace{10pt}


\begin{figure}[htbp]
\centering

\begin{tikzpicture}[
  y=1.5cm,
  x=1.5cm,
  edge/.style={draw=black, line width=0.55pt},
  vertex/.style={
    circle,
    draw=black,
    fill=white,
    line width=0.55pt,
    minimum size=5.4mm,
    inner sep=0pt,
    font=\small
  },
  facevalue/.style={font=\fontsize{7}{8}\selectfont, text=red}
]

\draw[edge] (0,4)--(1,4)--(2,4)--(3,4)--(4,4);
\draw[edge] (0,3)--(1,3)--(2,3)--(3,3)--(4,3);

\draw[edge] (0,2)--(1,2);
\draw[edge] (3,2)--(4,2);
\draw[edge] (5,2)--(6,2);

\draw[edge] (0,1)--(1,1)--(2,1)--(3,1)--(4,1)--(5,1)--(6,1);
\draw[edge] (0,0)--(1,0)--(2,0)--(3,0)--(4,0)--(5,0)--(6,0);

\foreach \x in {0,1,2,3,4} {
  \draw[edge] (\x,4)--(\x,3);
}

\foreach \x in {0,1,3,4} {
  \draw[edge] (\x,3)--(\x,2);
}

\foreach \x in {0,1,3,4,5,6} {
  \draw[edge] (\x,2)--(\x,1);
}

\foreach \x in {0,1,2,3,4,5,6} {
  \draw[edge] (\x,1)--(\x,0);
}

\foreach \x in {0.5,1.5,2.5,3.5} {
  \node[facevalue] at (\x,3.5) {64};
}
\node[facevalue] at (0.5,2.5) {64};
\node[facevalue] at (3.5,2.5) {64};
\node[facevalue] at (0.5,1.5) {64};
\node[facevalue] at (3.5,1.5) {64};
\node[facevalue] at (5.5,1.5) {64};
\foreach \x in {0.5,1.5,2.5,3.5,4.5,5.5} {
  \node[facevalue] at (\x,0.5) {64};
}

\node[vertex] at (0,4) {$\mathbf{2}$};
\node[vertex] at (1,4) {$\mathbf{29}$};
\node[vertex] at (2,4) {$\mathbf{4}$};
\node[vertex] at (3,4) {$\mathbf{27}$};
\node[vertex] at (4,4) {$\mathbf{6}$};

\node[vertex] at (0,3) {$\mathbf{25}$};
\node[vertex] at (1,3) {$\mathbf{8}$};
\node[vertex] at (2,3) {$\mathbf{23}$};
\node[vertex] at (3,3) {$\mathbf{10}$};
\node[vertex] at (4,3) {$\mathbf{21}$};

\node[vertex] at (0,2) {$\mathbf{1}$};
\node[vertex] at (1,2) {$\mathbf{30}$};
\node[vertex] at (3,2) {$\mathbf{22}$};
\node[vertex] at (4,2) {$\mathbf{11}$};
\node[vertex] at (5,2) {$\mathbf{20}$};
\node[vertex] at (6,2) {$\mathbf{13}$};

\node[vertex] at (0,1) {$\mathbf{18}$};
\node[vertex] at (1,1) {$\mathbf{15}$};
\node[vertex] at (2,1) {$\mathbf{16}$};
\node[vertex] at (3,1) {$\mathbf{17}$};
\node[vertex] at (4,1) {$\mathbf{14}$};
\node[vertex] at (5,1) {$\mathbf{19}$};
\node[vertex] at (6,1) {$\mathbf{12}$};

\node[vertex] at (0,0) {$\mathbf{3}$};
\node[vertex] at (1,0) {$\mathbf{28}$};
\node[vertex] at (2,0) {$\mathbf{5}$};
\node[vertex] at (3,0) {$\mathbf{26}$};
\node[vertex] at (4,0) {$\mathbf{7}$};
\node[vertex] at (5,0) {$\mathbf{24}$};
\node[vertex] at (6,0) {$\mathbf{9}$};

\end{tikzpicture}

\caption{A $C_4$-face-magic labeling of a polyomino with a ``hole" in it}
\label{fig:graph-face-values}
\end{figure}

\hfill $\lozenge$


\section{Future directions}\label{OpenQuestions}
Let $\mathcal{C}(P)$ denote the set of cells of a polyomino $P$, and
let $n=|V(P)|$. Write $[n]=\{1,2,\ldots,n\}$. We define the \emph{$C_4$-face-magic spectrum} of $P$
by
\[
\operatorname{Spec}_{C_4}(P)
=
\left\{
s\in\mathbb N:
\exists\text{ a bijection }\lambda:V(P)\to[n]
\text{ such that }
\sum_{v\in V(C)}\lambda(v)=s
\text{ for every }C\in\mathcal C(P)
\right\}.
\]
Thus, $\operatorname{Spec}_{C_4}(P)$ is the set of all possible magic
constants of $C_4$-face-magic labelings of $P$. By
Theorem~\ref{Theorem_Main}, this set is nonempty for every polyomino.

The spectrum has a natural symmetry. Indeed, if $\lambda$ is a
$C_4$-face-magic labeling with magic constant $s$, then the
complementary labeling $\overline{\lambda}(v)=n+1-\lambda(v)$
has magic constant $4(n+1)-s$.
Consequently, $s\in\operatorname{Spec}_{C_4}(P)
$ if and only if $4(n+1)-s\in\operatorname{Spec}_{C_4}(P)$.
Moreover,
\[
\operatorname{Spec}_{C_4}(P)
\subseteq
\{10,11,\ldots,4n-6\},
\]
since the four vertices of each cell receive distinct labels. We conclude with the following questions.

\begin{enumerate}
    \item Determine $\operatorname{Spec}_{C_4}(P)$ for some class of
    polyominoes $P$.

    \item Is $\operatorname{Spec}_{C_4}(P)$ always an interval of
    integers?

    \item Characterize the polyominoes $P$ for which $        2(n+1)\in\operatorname{Spec}_{C_4}(P).$ 

    \item Determine the minimum and maximum elements of
    $\operatorname{Spec}_{C_4}(P)$, perhaps in terms of structural properties
    of $P$.

    \item Explore similar questions for $C_n$-face-magic graphs for $n \geq 5$.
\end{enumerate}

\section{Tool and computational resource disclosure}\label{Disclosure}

An initial proof of Conjecture~\ref{conj}, including versions
of Lemmas~\ref{Lemma_AllEven} and~\ref{AltPath}, was generated by
OpenAI's GPT-5.6 Sol. For
this proof search, we adapted the prompt that led to a purported proof of the longstanding \emph{Cycle Double
Cover Conjecture}. The full original prompt for the CDCC is available at \cite{OpenAI-CDCPrompt}.


The authors independently verified all mathematical arguments and rewrote the AI-assisted material; they take full responsibility for the final contents of the paper.


\begin{thebibliography}{99}

\bibitem{Curran_2}
Curran, S.J., Odd order $C_4$-face-magic projective grid graphs. \textit{Electron. J. Graph Theory Appl.} 14 (2026), no. 1, 153-178.

\bibitem{Curran_1}
Curran, S.J., Low, R.M., and Locke, S.C., $C_4$-face-magic toroidal labelings on $C_{m} \times C_{n}$. \textit{Art Discrete Appl. Math}. 4 (2021), no. 1, \#P1.04, 33pp. 

\bibitem{Arman_Program}
Eisenkolb-Vaithyanathan, A.,
\emph{$C_4$-Face-Magic Polyomino Labeler}, GitHub repository, 2026.
\url{https://github.com/ArmanE-V/C4-Face-Magic-Labeler} (accessed August 7, 2026).

\bibitem{Harary94}
Harary, F., \textit{Graph Theory}. Addison-Wesley, Reading, MA (1994).

\bibitem{Myers}
Myers, T. and Curran, S.J., $C_4$-face-magic labeling on a $4 \times 4$ Klein bottle grid graph. arXiv:2606.06817v1.

\bibitem{OpenAI-CDCPrompt}
OpenAI,
\emph{Prompt Used for ``A Proof of the Cycle Double Cover Conjecture,''}
2026.
Available at:
\url{https://cdn.openai.com/pdf/04d1d1e4-bc75-476a-97cf-49055cd98d31/cdc_prompt.pdf}
(accessed August 6, 2026).

\bibitem{SLL}
Shiu, W.C., Low, R.M., and Liu, A.K., Face-magic labelings of polygonal graphs. \textit{Theory Appl. Graphs}. 11 (2024), no. 1, Art. 7, 14pp.

    
\end{thebibliography}
\end{document}